\documentclass[a4paper,12pt, twoside, reqno]{amsart}
\usepackage{amsthm}
\usepackage{amsmath}
\usepackage{amssymb}
\usepackage{eucal}

\newtheorem{thm}{Theorem}[section]
\newtheorem{coro}[thm]{Corollary}
\newtheorem{prop}[thm]{Proposition}

\theoremstyle{definition}
\newtheorem{rmk}[thm]{Remark}

\catcode`@=11
\catcode`@=12

\let\emptyset=\varnothing






\pagenumbering{roman}

\pagenumbering{arabic}

\begin{document}

\title{A GENERAL CONCEPT OF MULTIPLE FIXED POINT FOR MAPPINGS DEFINED ON SPACES WITH A DISTANCE}
\author{Mitrofan M. Choban$^{1}$ and Vasile Berinde$^{2,3}$}

\medskip
\begin{abstract}
Our main aim in this paper is to introduce a general concept of multidimensional fixed point of a mapping in spaces with distance and establish various multidimensional fixed point results. This new concept simplifies the similar notion from [A. Roldan, J. Martinez-Moreno, C. Roldan,   {\it Multidimensional fixed point theorems in partially ordered complete metric
spaces}, J. Math. Anal. Appl. 396 (2012), 536--545]. The obtained multiple fixed point theorems extend,  generalise and unify many related results in literature. 
\end{abstract}

\maketitle

\pagestyle{myheadings} \markboth{Mitrofan M. Choban and Vasile Berinde} {A GENERAL CONCEPT OF MULTIPLE FIXED POINT...}

 \section{Introduction}

 The notion of {\it multidimensional fixed point} emerged naturally from the rich literature devoted to the study of coupled fixed points in the last four decades. The concept of {\it coupled fixed point} itself  has been first introduced and studied by V. I. Opoitsev,  in a series of papers published in the period 1975-1986, see \cite{Op75}-\cite{Op86}, for the case of mixed monotone nonlinear operators satisfying a nonexpansive type condition.  
 
Later, in 1987, Guo and Lakshmikantham  \cite{Guo},  studied  coupled fixed points in connection with coupled quasisolutions of an initial value problem for ordinary differential equations (see also \cite{Guo88}). In 1991, Chen \cite{Chen} obtained coupled fixed point results of $\frac{1}{2}$-$\alpha$-condensing and mixed monotone operators, where $\alpha$ denotes the Kuratowski's measure of non compactness, thus extending some previous results from \cite{Guo} and \cite{Shen}. In the same year, Chang and Ma \cite{Chang91} discussed some existence results and iterative approximation of coupled fixed points for mixed monotone condensing set-valued operators. Next, Chang, Cho and Huang \cite{Chang96} obtained coupled fixed point results of $\frac{1}{2}$-$\alpha$-contractive and generalized condensing mixed monotone operators.

More recently,  Gnana Bhaskar and Lakshmikantham in \cite{Bha} established coupled fixed point results for mixed monotone operators in partially ordered metric spaces in the presence of a Bancah contraction type condition. Essentialy, the results by Bhaskar and Lakshmikantham in \cite{Bha}  combine, in the context of coupled fixed point theory, the main fixed point results previously obtained by Nieto and Rodriguez-Lopez in \cite{Nie06} and  \cite{Nie07}. The last two papers are, in turn, in continuation to a very important fixed point theorem established in the seminal paper of Ran and Reurings \cite{Ran}, which has the merit to combine a metrical fixed point theorem (the contraction mapping principle) and an order theoretic fixed point result (Tarski's fixed point theorem). 

Various applications of the theoretical results in the previous mentioned papers were also given by several authors to: a) Uryson integral equations \cite{Op78}; b) a system of Volterra integral equations  \cite{Chen}, \cite{Chang96}; c) a class of functional equations arising in dynamic programming \cite{Chang91}; d) initial value problems for first order differential equations with discontinuous right hand side \cite{Guo}; e) (two point) periodic boundary value problems \cite{Ber12a}, \cite{Bha}, \cite{Cir}, \cite{Urs}; f) integral equations and systems of integral equations \cite{Agha}, \cite{Algha}, \cite{Aydi}, \cite{BS}, \cite{Gu}, \cite{Hus}, \cite{Shat}, \cite{Sint}, \cite{Xiao}; g) nonlinear elliptic problems and delayed hematopoesis models \cite{Wu}; h) nonlinear Hammerstein integral equations \cite{Sang}; i) nonlinear matrix and nonlinear quadratic equations \cite{AghaFPT}, \cite{BS}; j) initial value problems for ODE \cite{Amini}, \cite{Samet} etc.
 For a very recent account on the developments of coupled fixed point theory, we also refer to \cite{JNCA}.
 
In 2010, Samet and Vetro  \cite{SV} considered a concept of {\it fixed point of m-order} as a natural extension
of the notion of coupled fixed point.  One year later, Berinde and Borcut \cite{BB1} introduced the concept of {\it triple fixed
point} and proved triple fixed-point theorems using mixed monotone mappings, while, in 2012, Karapinar and Berinde \cite{KB}, have studied quadruple fixed points of nonlinear contractions in partially ordered metric spaces. 

After these papers, a substantial number of articles were dedicated to the study of triple fixed point and quadruple fixed point theory. Next, J. Roldan,  Martinez-Moreno and C. Roldan \cite{R1}  introduced a new concept of  {\it fixed point
of $m$-order}, which is also called by various authors  "a multidimensional fixed point", or "an $m$-tuplet
fixed point", or "an $m$-tuple fixed point". For some other very recent results on this topic we refer to \cite{Aga14}, \cite{Aga}, \cite{Al}, \cite{Bo}, \cite{MB}, \cite{BB2}, \cite{Dal},  \cite{K}, \cite{Kar13}, \cite{Kar13a},  \cite{Imdad13}, \cite{Imdad13a}, \cite{Imdad14}, \cite{Lee}, \cite{Mutlu}, \cite{Ola}, \cite{R1}-\cite{Rus}, \cite{Sol}, \cite{Wang}, \cite{ZCS}.

In the present paper, our main aim is to introduce and study a general concept of multidimensional fixed point in the setting of ordered
spaces with distance. This concept simplifies the similar notion from \cite{R1} and allows us to obtain general multiple fixed point theorems that include as particular cases several related results in literature.




This point of view allows us to reduce the multidimensional case of fixed points and coincidence points to 
the one-dimensional case. Note that, the first author who reduced the problem of finding a coupled fixed point of mixed monotone operators to the problem of finding a fixed point of an increasing operator was Opoitsev, see for example  \cite{Op78}. For a more recent similar approach we refer to \cite{Ber11}.

 \section{Preliminaries }

By a space we understand a  topological  $T_0$-space.
We use the terminology from \cite{Eng, GD, RP, C1}.

Let $X$ be a non-empty set and $d : X\times X \rightarrow \mathbb R$ be a mapping such that:

($i_m$) $d(x, y) \geq  0$, for all $x, y \in X$;

($ii_m$) $d(x, y) + d(y,x) = 0$ if and only if $x = y$.

Then $d$ is called a {\it distance} on $X$, while  $(X, d)$ is called a {\it distance space}.

Let $d$ be a distance on $X$ and $B(x,d,r)$ = $\{y \in X: d(x, y) < r\}$ be the {\it ball} with the center $x$
and radius $r > 0$. The set $U \subset X$ is called {\it $d$-open} if for any $x \in U$ there exists $r > 0$
such that $B(x,d,r) \subset U$. The family $\mathcal T(d)$ of all $d$-open subsets is the topology on $X$ 
generated by $d$. A distance space is a {\it sequential space}, i.e., a space for which a set $B \subseteq X$ is closed if and only 
if together with any sequence it contains all its limits \cite{Eng}.
   
  Let $(X, d)$ be a  distance space, $\{x_n\}_{n \in \mathbb N}$ be a sequence in $X$ and $x \in X$. We say that the sequence   $\{x_n\}_{n \in \mathbb N}$ is:

1)  {\it convergent to} $x$ if and only if $lim_{n\rightarrow \infty }d(x, x_n) = 0$. 
We denote this by $x_n\rightarrow x$ or $x = lim_{n\rightarrow \infty }x_n$ (actually, we might denote better  $x \in lim_{n\rightarrow \infty }x_n$);

2)   {\it convergent} if  it converges to some point $x$ in  $X$;

3)   {\it Cauchy} or {\it fundamental} if $lim_{n, m\rightarrow \infty }d(x_n, x_m) = 0$.
    
  A  distance space $(X, d)$ is called {\it complete} if  every
Cauchy sequence in $X$ converges to some point $x$  in $X$.

    Let $X$ be a non-empty set and $d $ be a distance on $X$. Then:  
\begin{itemize}
\item $(X, d)$ is called a {\it symmetric space} and $d$ is called a {\it symmetric} on $X$ if
  
($iii_m$) $d(x, y)$ = $d(y, x)$,  for all $x, y \in X$;
\item  $(X, d)$ is called a {\it quasimetric space} and $d$ is called a {\it quasimetric} on $X$  if

($iv_m$) $d(x, z) \leq d(x, y) + d(y, z)$,  for all $x, y, z \in X$;
\item  $(X, d)$ is called a {\it metric space} and $d$ is called a {\it metric} if  $d$ is a symmetric and a  quasimetric, simultaneously.
\end{itemize}

      Let $X$ be a non-empty set and  $d(x, y) $ be a distance on $X$
   with the following property:

($N$) for each point $x \in X$ and any $\varepsilon > 0$ there exists $\delta = \delta (x,\varepsilon ) > 0$
such that from $d(x, y) \leq \delta $ and $d(y,z) \leq \delta $ it follows $d(x, z) \leq \varepsilon $.

\noindent
Then $(X, d)$ is called an {\it N-distance space} and $d$ is called an {\it N-distance}
on $X$.
If $d$ is a symmetric, then we say that $d$ is a N-symmetric.

Spaces with N-distances were studied by V. Niemyzki  \cite{Ne1, Ne2} and by S. I. Nedev \cite{N}. Clearly, any (quasi) metric space is a N-distance space. If $d$ satisfies uniformly the N-distance condition, that is, 

($F$) for any $\varepsilon > 0$ there exists $\delta = \delta (\varepsilon ) > 0$
such that from $d(x, y) \leq \delta $ and $d(y,z) \leq \delta $ it follows $d(x, z) \leq \varepsilon $,
then  $d$ is called a {\it F-distance}  or a {\it Fr\'echet distance},  while  $(X, d)$ is 
called an {\it F-distance space}. 

Obviously, any F-distance $d$ is an N-distance, too, but the reverse is not true, in general, see Examples 1.1 and 1.2 in \cite{C1}.
If $d$ is a symmetric and a F-distance on a space $X$, then we say that $d$ is a $F$-symmetric.

 \begin{rmk}\label{rm1.1}  If $(X, d)$ is an $F$-symmetric space, then any convergent sequence is a 
Cauchy sequence. For $N$-symmetric spaces and for quasimetric spaces this assertion is not more true.
 \end{rmk}

 If $s > 0$ and $d(x,y) \leq s [d(x,z) + d(z, y)]$ for all points $x, y, z \in X$, then we say that $d$ is an {\it $s$-distance}.
 Any $s$-distance is an $F$-distance.

 A distance space $(X, d)$ is called an {\it H-distance space} if, for any two distinct points $x, y \in X$,
 there exists $\delta  = \delta (x, y) > 0$ such that $B(x, d, \delta ) \cap B(y, d, \delta ) = \emptyset $.
 
 \begin{rmk}\label{rm1.2} A distance space $(X, d)$  is an $H$-distance space if and
only if any convergent sequence  in $X$ has a unique limit point. 
 \end{rmk}
 
We say that  $(X, d)$ is a $C${\it -distance space} or a {\it Cauchy distance space}  if any convergent Cauchy sequence has a unique limit point.
 
  Fix a mapping $\varphi :X \longrightarrow X$. For any point $x \in X$ we put $\varphi ^0(x) = x$,
  $\varphi ^1(x) = \varphi (x), ..., \varphi ^n(x) = \varphi (\varphi ^{n-1}(x))$,\dots. The
  sequence 
  $$O(\varphi ,x)= \{x_n = \varphi ^n(x): n \in \mathbb N\}$$ is called
  the {\it orbit of $\varphi $} at the point $x$ or the {\it Picard sequence}  at the point $x$. 
 
    Let  $(X, d)$ be a distance space and $\varphi :X \longrightarrow X$  a mapping.  
 We say that the mapping $\varphi $: 
 
-  is {\it contractive} if $d(\varphi (x), \varphi (y)) < d(x,y)$ provided $d(x,y) > 0$;

- is a {\it  contraction} if there exists $\lambda \in [0, 1)$ such that 
$d(\varphi (x), \varphi (y)) \leq \lambda d(x,y)$, for all $x, y \in X$;

-  is {\it strongly asymptotically regular} if
$lim_{n\rightarrow \infty }(d(\varphi ^n(x),\varphi ^{n+1}(x) + d(\varphi ^{n+1}(x), \varphi ^n(x)) ) )$ = $0$, for any $x \in X$.



Now, let $(X,d)$ be a distance space and $m \in \mathbb N $ = $\{1, 2, ...\}$. On the set $X^m$ we consider the distances
$$d^m((x_1,..., x_m), (y_1,...,y_m)) = sup \{d(x_i,y_i): i \leq m\}$$ 
and 
$$\bar{d}^m((x_1,..., x_m), (y_1,...,y_m)) = \sum_{i=1}^{m}  d(x_i,y_i).$$
Obviously, $(X^m, d^m)$ and $(X^m, \bar{d}^m)$ are distance spaces, too.

\begin{prop}\label{P2.1}    Let $(X, d)$ be a distance space. Then:

1. If $d$ is a symmetric, then $(X^m, d^m)$ and $(X^m, \bar{d}^m)$ are  symmetric spaces, too.

2. If $d$ is a quasimetric, then  $(X^m, d^m)$ and $(X^m, \bar{d^m})$ are  quasimetric spaces, too.

3.  If $d$ is a metric, then   $(X^m, d^m)$ and $(X^m, \bar{d}^m)$ are  metric spaces, too.

4.  If $d$ is an $F$-distance space, then  $(X^m, d^m)$ and $(X^m, \bar{d}^m)$ are   $F$-distance spaces, too.

5.   If $d$ is an $N$-distance space, then $(X^m, d^m)$ and $(X^m, \bar{d}^m)$ are   $N$-distance spaces, too.

6.  If $d$ is an $H$-distance space, then $(X^m, d^m)$ and $(X^m, \bar{d}^m)$ are   $H$-distance spaces, too.

7. If  $(X, d)$ is a $C$-distance space, then   $(X^m, d^m)$ and $(X^m, \bar{d}^m)$ are   $C$-distance spaces, too.

8.  If $(X, d)$ is a complete distance space, then $(X^m, d^m)$ and $(X^m, \bar{d}^m)$ are   complete distance spaces, too.

9.  If $d$ is an $s$-distance space, then $(X^m, d^m)$ and $(X^m, \bar{d}^m)$ are   $s$-distance spaces, too.

10. The spaces  $(X^m, d^m)$ and $(X^m, \bar{d}^m)$ share the same convergent sequences and the same Cauchy sequences.
Moreover, the distances $d^m$ and $\bar{d^m}$ are uniformly equivalent, i.e., for each $\varepsilon  > 0$, there exists $\delta $
= $\delta (\varepsilon ) > 0$ such that:

- from $d^m(x,y) \leq \delta $ it follows $\bar{d^m}(x,y) \leq \varepsilon $;

-  from $\bar{d^m}(x,y) \leq \delta $ it follows $d^m(x,y) \leq \varepsilon $. 
\end{prop}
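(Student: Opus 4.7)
The plan is to handle all ten items in a unified way by exploiting the two-sided bound
$$
d^m(x,y)\le\bar d^m(x,y)\le m\,d^m(x,y)
$$
for every $x=(x_1,\ldots,x_m),\,y=(y_1,\ldots,y_m)\in X^m$. This single inequality already gives item 10 (uniform equivalence of $d^m$ and $\bar d^m$, with moduli $\delta(\varepsilon)=\varepsilon/m$ and $\delta(\varepsilon)=\varepsilon$). Once 10 is in hand, each property for $\bar d^m$ can be deduced from the corresponding property for $d^m$, so the work reduces to checking $d^m$ (with a few direct verifications for $\bar d^m$ where they are just as easy).

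For item 1, both $\sup$ and $\sum$ respect componentwise symmetry. For item 2, summing or taking sups of the coordinatewise triangle inequalities gives the triangle inequality for $\bar d^m$ and $d^m$ respectively, and item 3 is then immediate from 1 and 2. Item 9 is analogous: applying $\sup_i$ or $\sum_i$ to $d(x_i,z_i)\le s[d(x_i,y_i)+d(y_i,z_i)]$ preserves the factor $s$. For item 4, choose $\delta(\varepsilon)$ for $d^m$ to be the same $\delta$ furnished by the $F$-property of $d$: if $d^m(x,y),d^m(y,z)\le\delta$ then $d(x_i,y_i),d(y_i,z_i)\le\delta$ for every coordinate $i$, hence $d(x_i,z_i)\le\varepsilon$ and $d^m(x,z)\le\varepsilon$.

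The only step needing a small twist is item 5 (the $N$-distance case), because the modulus depends on the base point. Given $x=(x_1,\ldots,x_m)\in X^m$ and $\varepsilon>0$, set $\delta(x,\varepsilon):=\min\{\delta_X(x_i,\varepsilon):1\le i\le m\}>0$, where $\delta_X$ is the modulus from the $N$-property of $d$; the same coordinatewise argument as in item 4 then works. For item 6, if $x\ne y$ in $X^m$, pick an index $i$ with $x_i\ne y_i$ and let $\delta_0=\delta(x_i,y_i)>0$ separate $x_i$ and $y_i$ in $(X,d)$; any point in $B(x,d^m,\delta_0)\cap B(y,d^m,\delta_0)$ would have its $i$-th coordinate simultaneously in $B(x_i,d,\delta_0)\cap B(y_i,d,\delta_0)=\emptyset$.

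For items 7 and 8, observe that a sequence $\{\mathbf x^{(n)}\}\subset X^m$ is $d^m$-convergent to $\mathbf x$ (resp.\ $d^m$-Cauchy) if and only if each coordinate sequence $\{x_i^{(n)}\}$ is $d$-convergent to $x_i$ (resp.\ $d$-Cauchy) in $X$; this is obvious from $d(x_i^{(n)},x_i)\le d^m(\mathbf x^{(n)},\mathbf x)\le\max_j d(x_j^{(n)},x_j)$. Hence completeness (item 8) reduces to picking the coordinatewise limit, and uniqueness of limits of convergent Cauchy sequences (item 7) follows coordinatewise from the $C$-property of $(X,d)$. The main point to flag throughout is the base-point dependence in item 5, which is overcome by taking the minimum of the coordinate moduli; every other step is a routine componentwise check.
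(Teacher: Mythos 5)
Your proposal is correct. The paper itself offers no argument for this proposition (its ``proof'' reads, in full, ``It is well known''), so there is nothing to compare against; your componentwise verification, driven by the two-sided bound $d^m\le\bar d^m\le m\,d^m$ and with the correct handling of the base-point-dependent modulus via $\min_i\delta_X(x_i,\varepsilon)$ in the $N$-distance case, is exactly the standard argument the authors are implicitly invoking.
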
    

\begin{proof}  It is well known.
 \end{proof}

 \section{Multiple fixed point principles}

Fix $m \in \mathbb N$ and denote by $\lambda $ = $(\lambda _1,... , \lambda _m)$ a collection of mappings
$\{\lambda _i: \{1, 2, ..., m\}$ $\longrightarrow \{1, 2, ..., m\}: 1\leq i \leq  m\}$.

Let $(X, d)$ be a distance space and $F: X^m \longrightarrow X$ be an operator. The operator $F$ and the
mappings $\lambda $ generate the operator $\lambda F: X^m \longrightarrow X^m$, where 
$$\lambda F(x_1,....,x_m)
= (y_1,...,y_m) \textnormal{ and } y_i = F(x_{\lambda _i(1)}, ..., x_{\lambda _i(m)}),
$$ 
for any point $(x_1, ..., x_m) \in X^m$
and any index $i \in \{1, 2, ..., m\}$. 

A point $a$ = $(a_1, ..., a_m) \in X^m$ is called a $\lambda ${\it -multiple fixed point} of the operator $F$ if $a$ = $\lambda F(a)$,
i.e., $a_i$ = $F(a_{\lambda _i(1)}, ..., a_{\lambda _i(m)})$ for any  $i \in \{1, 2, ..., m\}$.

 We say that the operator $F$: 
 \begin{itemize}
\item is {\it $\lambda $-contractive} if $d^m(\lambda F(x), \lambda F(y)) < d^m(x,y)$, for all $x, y \in X^m$ with $d^m(x,y) > 0$;
\item is a {\it   $\lambda $-contraction} if there exist a number  $k  \in [0, 1)$ such that 
\begin{center}
$d(F(x_1,....,x_m),F(y_1,....,y_m)) \leq k \sup \{d(x_i,y_i): i \leq m\}$, 
\end{center}
for all $(x_1,....,x_m), (y_1,....,y_m) \in X^m$. 
\item is {\it $\bar{\lambda }$-contractive} if $\bar{d}^m(\lambda F(x), \lambda F(y)) < \bar{d}^m(x,y)$,  for all $x, y \in X^m$ with $\bar{d}^m(x,y) > 0$;
\item is a {\it   $\bar{\lambda }$-contraction} if there exists a number  $k  \in [0, 1)$ such that 
\begin{center}
$d(F(x_1,....,x_m),F(y_1,....,y_m)) \leq \frac{k}{m} \cdot \sum_{i=1}^{m} d(x_i,y_i),$ 
\end{center}
for all $(x_1,....,x_m), (y_1,....,y_m) \in X^m$.
\end{itemize}

\begin{prop}\label{P3.1}    Let $(X, d)$ be a distance space, $m \in \mathbb N$, $F: X^m \rightarrow X$ be an operator,
$\lambda=\{\lambda _i: \{1, 2, ..., m\}$ $\longrightarrow \{1, 2, ..., m\}: 1\leq i \leq  m\}$ be a collection of mappings, $k \geq  0$, $a = (a_1,....,a_m) \in X^m$,
$b = (b_1,....,b_m)  \in X^m$ such that
$$d(F(a_{\lambda _i(1)},....,a_{\lambda _i(m)}),F(b_{\lambda _i(1)},....,b_{\lambda _i(m)})) \leq k \sup \{d(a_i,b_i): i \leq m\},$$
for each $1\leq i \leq  m$. 
Then $d^m(\lambda F(a), \lambda F(b)) \leq  k d^m(a,b)$.
\end{prop}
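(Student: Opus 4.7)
The plan is to unwind the definitions step by step: since $d^m$ is defined as a coordinatewise supremum and $\lambda F$ acts coordinatewise on $X^m$, the claim will follow by applying the hypothesis in each of the $m$ coordinates and then taking the supremum.

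First I would write out what $\lambda F(a)$ and $\lambda F(b)$ are using the definition given just before the proposition: the $i$-th coordinate of $\lambda F(a)$ is $y_i = F(a_{\lambda_i(1)},\ldots,a_{\lambda_i(m)})$ and the $i$-th coordinate of $\lambda F(b)$ is $z_i = F(b_{\lambda_i(1)},\ldots,b_{\lambda_i(m)})$. Then by the definition of $d^m$,
$$d^m(\lambda F(a),\lambda F(b)) = \sup\{d(y_i,z_i): 1\leq i\leq m\}.$$

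Next I would invoke the hypothesis, which is exactly a bound on each term $d(y_i,z_i)$: for every $i$,
$$d(y_i,z_i) = d\bigl(F(a_{\lambda_i(1)},\ldots,a_{\lambda_i(m)}),\,F(b_{\lambda_i(1)},\ldots,b_{\lambda_i(m)})\bigr) \leq k\sup\{d(a_j,b_j): j\leq m\} = k\, d^m(a,b).$$
Since the right-hand side does not depend on $i$, taking the supremum over $i$ on the left yields $d^m(\lambda F(a),\lambda F(b))\leq k\,d^m(a,b)$, which is the desired conclusion.

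There is essentially no obstacle here; the whole content of the proposition is a careful bookkeeping of indices. The only thing to be mindful of is that the $\sup$ on the right side of the hypothesis is over the original indices $i\leq m$, so it equals $d^m(a,b)$ regardless of the reindexing performed by $\lambda_i$. This is why the coordinate-wise permutation of entries by each $\lambda_i$ does not enlarge the bound. I would write the proof in three lines: display $d^m(\lambda F(a),\lambda F(b))$ as the supremum of $d(y_i,z_i)$, insert the hypothesis, and conclude.
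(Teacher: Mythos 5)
Your proof is correct and takes essentially the same route as the paper's: write out the $i$-th coordinates of $\lambda F(a)$ and $\lambda F(b)$, bound each $d(y_i,z_i)$ by the hypothesis, and take the supremum over $i$. The only cosmetic difference is that the paper inserts the intermediate bound $\sup\{d(a_{\lambda_i(j)},b_{\lambda_i(j)}):j\le m\}\le\sup\{d(a_i,b_i):i\le m\}$ (reading the hypothesis through the permuted arguments), whereas you apply the stated bound directly; both are fine.
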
    

\begin{proof} Let $u_i$ = $F(a_{\lambda _i(1)},....,a_{\lambda _i(m)})$ and $v_i$ = $F(b_{\lambda _i(1)},....,b_{\lambda _i(m)})$
for any $i \leq  m$. Then $\lambda F(a)$ = $u$ = $(u_1, ..., u_m)$ and  $\lambda F(b)$ = $v$ = $(v_1, ..., v_m)$.   
We have  $d^m(\lambda F(a), \lambda F(b))$ = $d^m(u, v)$ = $\sup \{d(u_i,v_i): i \leq m\}$ =
  $\sup \{d(F(a_{\lambda _i(1)}, ..., a_{\lambda _i(m)})$, $F(b_{\lambda _i(1)}, ..., b_{\lambda _i(m)}))$: $i \leq m\}  \leq $
  $ \sup \{ k \sup\{d(a_{\lambda _i(j)}$, $b_{\lambda _i(j)}): j \leq m\}: i \leq m\}$ 
$\leq k \sup \{d(a_i, b_i): i \leq m\}$ = $k d^m(a, b)$.
 \end{proof}

\begin{coro}\label{3C.1}    Let $(X, d)$ be a distance space $m \in \mathbb N$ and $F: X^m \rightarrow X$ be an operator. 
If $F$ is a $\lambda $-contraction, then $\lambda F$ is a contraction on the distance space  $(X^m, d^m)$.
\end{coro}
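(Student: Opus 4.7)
The plan is to derive this as an essentially immediate consequence of Proposition \ref{P3.1}. The hypothesis that $F$ is a $\lambda$-contraction gives a single constant $k \in [0,1)$ such that
$$d(F(x_1,\ldots,x_m), F(y_1,\ldots,y_m)) \leq k \sup\{d(x_i,y_i) : i \leq m\}$$
for \emph{all} choices of tuples in $X^m$. The contraction definition is symmetric in the variables, so we are free to substitute the permuted tuples $(a_{\lambda_i(1)},\ldots,a_{\lambda_i(m)})$ and $(b_{\lambda_i(1)},\ldots,b_{\lambda_i(m)})$ for $(x_1,\ldots,x_m)$ and $(y_1,\ldots,y_m)$, respectively, for any fixed index $i \in \{1,\ldots,m\}$.

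So, first I would fix arbitrary $a = (a_1,\ldots,a_m)$ and $b = (b_1,\ldots,b_m)$ in $X^m$. For each $i \leq m$, the $\lambda$-contraction inequality applied to the permuted tuples yields
$$d\bigl(F(a_{\lambda_i(1)},\ldots,a_{\lambda_i(m)}), F(b_{\lambda_i(1)},\ldots,b_{\lambda_i(m)})\bigr) \leq k \sup\{d(a_{\lambda_i(j)},b_{\lambda_i(j)}) : j \leq m\}.$$
Since $\lambda_i$ maps into $\{1,\ldots,m\}$, the set $\{d(a_{\lambda_i(j)},b_{\lambda_i(j)}) : j \leq m\}$ is contained in $\{d(a_\ell,b_\ell) : \ell \leq m\}$, so its supremum is at most $\sup\{d(a_\ell,b_\ell) : \ell \leq m\}$. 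Thus each $i$ satisfies
$$d\bigl(F(a_{\lambda_i(1)},\ldots,a_{\lambda_i(m)}), F(b_{\lambda_i(1)},\ldots,b_{\lambda_i(m)})\bigr) \leq k \sup\{d(a_\ell,b_\ell) : \ell \leq m\},$$
which is precisely the hypothesis of Proposition \ref{P3.1}.

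Invoking that proposition then gives $d^m(\lambda F(a), \lambda F(b)) \leq k\, d^m(a,b)$, and since $k \in [0,1)$, this is exactly the statement that $\lambda F$ is a contraction on $(X^m, d^m)$. There is no real obstacle here; the only subtle point is observing that the $\lambda$-contraction property may be applied to arbitrary tuples (including permuted ones), which allows Proposition \ref{P3.1} to be fed with the correct inequality.
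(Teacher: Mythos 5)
Your proof is correct and matches the paper's (implicit) argument: the corollary is stated without proof as an immediate consequence of Proposition \ref{P3.1}, and the deduction you supply --- applying the $\lambda$-contraction inequality to the permuted tuples and then bounding $\sup\{d(a_{\lambda_i(j)},b_{\lambda_i(j)}): j\leq m\}$ by $\sup\{d(a_\ell,b_\ell): \ell\leq m\}$ --- is exactly the computation already embedded in the paper's proof of that proposition. Nothing is missing.
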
     
 
\begin{prop}\label{P3.2}    Let $(X, d)$ be a distance space, $m \in \mathbb N$ and $F: X^m \rightarrow X$ be an operator,
$\{\lambda _i: \{1, 2, ..., m\}$ $\longrightarrow \{1, 2, ..., m\}: 1\leq i \leq  m\}$ be a collection of mappings, $k \geq  0$, $a = (a_1,....,a_m) \in X^m$,
$b = (b_1,....,b_m)  \in X^m$ such that 
$$d(F(a_{\lambda _i(1)},....,a_{\lambda _i(m)}),F(b_{\lambda _i(1)},....,b_{\lambda _i(m)})) \leq k/m \sum_{i=1}^{m} d(a_i,b_i),$$
for each $i \in  \{1, 2, ..., m\}$.
If the mapping $\lambda _i$ is a surjection
or, more generally, if $|\cup \{\lambda _i^{-1}(j): j \leq m\}| = m$ for each $i \in  \{1, 2, ..., m\}$, then 
$$\bar{d}^m(\lambda F(a), \lambda F(b)) \leq  k \bar{d}^m(a,b).$$
\end{prop}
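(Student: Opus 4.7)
The plan is to copy the strategy of Proposition \ref{P3.1} with sums in place of suprema, and then use a counting argument to absorb the extra factor of $m$ that appears on the right-hand side. The surjectivity of each $\lambda_i$ will be precisely what ensures that every coordinate of $a$ and $b$ is counted the right number of times when the inequalities for the individual coordinates are added together.

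First I would set $u_i = F(a_{\lambda _i(1)},\ldots,a_{\lambda _i(m)})$ and $v_i = F(b_{\lambda _i(1)},\ldots,b_{\lambda _i(m)})$, so that $\lambda F(a) = (u_1,\ldots,u_m)$ and $\lambda F(b) = (v_1,\ldots,v_m)$, mirroring the setup of Proposition \ref{P3.1}. Reading the right-hand side of the standing hypothesis as the natural $\bar{\lambda}$-contraction bound applied at the tuple $(a_{\lambda _i(1)},\ldots,a_{\lambda _i(m)})$ versus $(b_{\lambda _i(1)},\ldots,b_{\lambda _i(m)})$, I get, for each $i$,
$$d(u_i, v_i) \leq \frac{k}{m}\sum_{j=1}^{m} d(a_{\lambda _i(j)}, b_{\lambda _i(j)}),$$
and summing over $i$,
$$\bar{d}^m(\lambda F(a), \lambda F(b)) = \sum_{i=1}^{m} d(u_i, v_i) \leq \frac{k}{m}\sum_{i=1}^{m}\sum_{j=1}^{m} d(a_{\lambda _i(j)}, b_{\lambda _i(j)}).$$

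The key step is to interchange the order of summation and regroup the $m^2$ terms on the right by their common target $\ell = \lambda _i(j)$. Writing $N_\ell = \sum_{i=1}^{m} |\lambda _i^{-1}(\ell)|$ for the total number of pairs $(i,j)$ with $\lambda _i(j) = \ell$, the double sum becomes $\sum_{\ell=1}^{m} N_\ell\, d(a_\ell, b_\ell)$. The surjectivity hypothesis on each $\lambda _i$ forces $\lambda _i$ to be a bijection of $\{1,\ldots,m\}$, so $|\lambda _i^{-1}(\ell)| = 1$ for every $i$ and $\ell$, and therefore $N_\ell = m$ for every $\ell$; the stated generalisation, phrased in terms of the union of the preimage sets, plays exactly the same role. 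Substituting $N_\ell = m$ collapses the double sum to $m\,\bar{d}^m(a,b)$ and yields the claimed inequality $\bar{d}^m(\lambda F(a), \lambda F(b)) \leq k\, \bar{d}^m(a,b)$.

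The only real obstacle is the bookkeeping: one has to see that the factor $m$ coming from $N_\ell$ is exactly what cancels the factor $1/m$ in the contraction constant, and it is this cancellation that makes the surjectivity (or equivalent covering) condition on the $\lambda _i$ indispensable — drop it and some coordinate of $(a,b)$ could be overcounted, breaking the bound.
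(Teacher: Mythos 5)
Your proof is correct and follows essentially the same route as the paper's: same setup with $u_i,v_i$, same reading of the hypothesis as a bound by $\frac{k}{m}\sum_{j}d(a_{\lambda_i(j)},b_{\lambda_i(j)})$, and the same final estimate, which the paper states in one line and you justify more explicitly by regrouping the double sum according to $\ell=\lambda_i(j)$ and counting $N_\ell=m$. The extra bookkeeping is a welcome clarification of the step where surjectivity (hence bijectivity) of each $\lambda_i$ is actually used, but it is the same argument.
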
    

\begin{proof}  We put $u =(u_1,....,u_m) = \lambda F(a)$ and 
 $v =(v_1,....,v_m) = \lambda F(b)$. Then 
 $$\bar{d}^m(\lambda F(a), \lambda F(b)) = \sum_{i=1}^{m} d(u_i,v_i) =$$
  $$=\sum_{i=1}^{m} d(F(a_{\lambda _i(1)}, ..., a_{\lambda _i(m)}),F(b_{\lambda _i(1)}, ..., b_{\lambda _i(m)})) \leq $$
  $$\leq \sum_{i=1}^{m}  k/m \sum_{j=1}^{m}  d(a_{\lambda _i(j)}, b_{\lambda _i(j)})\leq k \sum_{i=1}^{m} d(a_i, b_i)=k \bar{d}^m(a, b).$$
 \end{proof}

\begin{coro}\label{C3.2}     Let $(X, d)$ be a distance space, $m \in \mathbb N$ and $F: X^m \rightarrow X$ be an operator. 
If $F$ is a $\bar{\lambda }$-contraction and for any $i \in  \{1, 2, ..., m\}$ the mapping $\lambda _i$ is a surjection
or, more generally, $|\cup \{\lambda _i^{-1}(j): j \leq m\}| = m$ for each $i \in  \{1, 2, ..., m\}$, then $\lambda F$ is a 
contraction on the distance space  $(X^m, \bar{d}^m)$.
\end{coro}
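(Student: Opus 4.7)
The plan is to deduce this corollary directly from Proposition \ref{P3.2}, which has already done the essential computational work. I would only need to verify that the hypothesis of Proposition \ref{P3.2} is satisfied once we unpack the definition of a $\bar\lambda$-contraction.

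First, I would fix $k \in [0,1)$ as supplied by the $\bar\lambda$-contraction property of $F$, and take arbitrary $a=(a_1,\ldots,a_m), b=(b_1,\ldots,b_m) \in X^m$. For each $i \in \{1,\ldots,m\}$, I substitute $x_j = a_{\lambda_i(j)}$ and $y_j = b_{\lambda_i(j)}$ in the defining inequality of a $\bar\lambda$-contraction, obtaining
$$d(F(a_{\lambda_i(1)},\ldots,a_{\lambda_i(m)}),F(b_{\lambda_i(1)},\ldots,b_{\lambda_i(m)})) \leq \frac{k}{m}\sum_{j=1}^{m} d(a_{\lambda_i(j)}, b_{\lambda_i(j)}).$$
Under the assumption that each $\lambda_i$ is a surjection of $\{1,\ldots,m\}$ onto itself (equivalently, a bijection, since the set is finite), the multiset $\{\lambda_i(j) : 1 \leq j \leq m\}$ equals $\{1,\ldots,m\}$, hence $\sum_{j=1}^m d(a_{\lambda_i(j)}, b_{\lambda_i(j)}) = \sum_{j=1}^m d(a_j, b_j)$. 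The same equality holds under the slightly more general counting hypothesis on the $\lambda_i^{-1}(j)$, which is exactly the condition appearing in Proposition \ref{P3.2}.

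Consequently, the hypothesis of Proposition \ref{P3.2} is satisfied with the constant $k$, so that proposition yields
$$\bar{d}^m(\lambda F(a), \lambda F(b)) \leq k\, \bar{d}^m(a,b)$$
for every $a, b \in X^m$. Since $k \in [0,1)$, this is precisely the definition of $\lambda F$ being a contraction on the distance space $(X^m, \bar{d}^m)$, and the proof is complete.

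There is really no obstacle here: the corollary is a bookkeeping consequence of Proposition \ref{P3.2}, and the role of the surjectivity (or counting) assumption is exactly to replace the sum $\sum_j d(a_{\lambda_i(j)}, b_{\lambda_i(j)})$ by $\sum_j d(a_j, b_j)$ when aggregating over $i$. The analogue for the $d^m$-metric, Corollary \ref{3C.1}, is in fact even more immediate because the supremum is automatically invariant under index permutations regardless of surjectivity.
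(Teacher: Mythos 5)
Your argument is correct and follows exactly the route the paper intends: the corollary is stated without a separate proof precisely because it is the immediate specialization of Proposition~\ref{P3.2}, obtained by substituting $x_j=a_{\lambda_i(j)}$, $y_j=b_{\lambda_i(j)}$ into the $\bar{\lambda}$-contraction inequality and using the surjectivity (hence bijectivity) of each $\lambda_i$ to identify $\sum_{j}d(a_{\lambda_i(j)},b_{\lambda_i(j)})$ with $\sum_{j}d(a_j,b_j)$. Nothing further is needed.
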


 \section{Multiple fixed points of general operators}

 Fix $m \in \mathbb N$, a  distance space $(X, d)$, an operator $\varphi :X^m \rightarrow X$
and the mappings $\lambda $ = $\{\lambda _i: \{1, ..., m\} \rightarrow  \{1, ..., m: i \leq m\}$.  
For any point $a$ = $(a_1, ..., a_n)\in X^m$ we put $a(1)$ = $\lambda F(a)$ and $a(n+1)$ = $\lambda F(a(n))$ 
for each  $n \in \mathbb N$.  The sequence $O(F,\lambda ,a)$ = $\{a(n): n \in \mathbb N\}$ is the Picard sequence at the point $a$
relatively to the operator $\lambda F$. 
The orbit $O(F,\lambda ,a)$ is called $(F,\lambda )${\it -bounded} if 
$$\sup \{d^m(a,a(n)) + d^m(a(n),a): n \in \mathbb N\} < \infty. $$
(this is equivalent
to  $$\sup \{\bar{d}^m(a,a(n)) + \bar{d}^m(a(n),a): n \in \mathbb N\}< \infty.) $$ The space $(X, d)$ is called $(F,\lambda )${\it -bounded} if any Picard
sequence $O(F,\lambda ,a)$ is  $(F,\lambda )$-bounded.

\begin{prop}\label{P4.1}   Let $(X, d)$ be a $C$-distance space. Then:

1. $d(x,y)$ = 0 if and only if $x= y$.

 2.  If, for $a \in X^m$, the Picard sequence $O(F,\lambda ,a)$  = $\{a(n): n \in \mathbb N\}$ is a convergent Cauchy sequence 
and $lim_{n\rightarrow \infty }a_n$ = $b$ = $(b_1, ..., b_m)$, then $b$ is a multidimensional fixed point of the operator $F$ with respect 
to the mappings $\lambda $, i.e., 
$$b_i = F(b_{\lambda _i(1)}, ..., b_{\lambda _i(m)}) \textnormal{ for each } i \in  \{1, 2, ..., m\}.
$$
\end{prop}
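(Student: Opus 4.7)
The plan is to prove the two parts separately, with Part 1 supplying the technical lemma $d(y,y)=0$ that Part 2 will use. Throughout I will exploit that, by Proposition \ref{P2.1}(7), $(X^m,d^m)$ is itself a $C$-distance space, so uniqueness of limits for convergent Cauchy sequences is available there as well.

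For Part 1, one direction is immediate: if $x=y$, then $(ii_m)$ gives $d(x,y)+d(y,x)=0$, and since each term is $\geq 0$ by $(i_m)$, both vanish. For the converse, assume $d(x,y)=0$. The first thing I would record is the special case $d(y,y)=0$ (take $x=y$ in the argument just given), which shows that the constant sequence $x_n:=y$ satisfies $\lim_{n,m\to\infty}d(x_n,x_m)=d(y,y)=0$; so it is Cauchy. It converges to $y$ for the same reason, and it also converges to $x$ because $d(x,x_n)=d(x,y)=0$. Invoking the $C$-distance property (unique limit of a convergent Cauchy sequence) gives $x=y$.

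For Part 2, I would apply exactly the same uniqueness-of-limit idea, but in $(X^m,d^m)$. The Picard sequence $\{a(n)\}$ is Cauchy and converges to $b$, so the shifted sequence $\{a(n+1)\}$ is again a convergent Cauchy sequence, with the same limit $b$. But $a(n+1)=\lambda F(a(n))$, so coordinate-wise
\[
a(n+1)_i \;=\; F\!\bigl(a(n)_{\lambda_i(1)},\ldots,a(n)_{\lambda_i(m)}\bigr)\;\longrightarrow\; b_i \quad(i=1,\ldots,m).
\]
The target is to identify this limit with $F(b_{\lambda_i(1)},\ldots,b_{\lambda_i(m)})$, because then $\lambda F(b)=b$, which is exactly the multiple fixed point condition. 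Uniqueness of the limit in the $C$-distance space $(X^m,d^m)$ closes the argument, and Part 1 ensures that this equality in $X^m$ is a genuine coordinate-wise equality in $X$ (so $b_i=F(b_{\lambda_i(1)},\ldots,b_{\lambda_i(m)})$ really holds as an identity of points, not just ``at distance zero'').

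The hard step is the transition
\[
F\!\bigl(a(n)_{\lambda_i(1)},\ldots,a(n)_{\lambda_i(m)}\bigr)\;\longrightarrow\; F\!\bigl(b_{\lambda_i(1)},\ldots,b_{\lambda_i(m)}\bigr),
\]
which is not automatic in a bare distance space and is where the proof has to call on some form of sequential continuity of $F$ at $(b_{\lambda_i(1)},\ldots,b_{\lambda_i(m)})$. In the intended setting this continuity is furnished by a contraction-type estimate of the kind appearing in Propositions \ref{P3.1}--\ref{P3.2} (through which $d(F(a(n)_{\lambda_i(1)},\ldots),F(b_{\lambda_i(1)},\ldots))$ is controlled by a constant multiple of $d^m(a(n),b)\to 0$), and I would read the statement of Proposition \ref{P4.1}(2) as implicitly assuming at least enough regularity of $F$ to make this step go through. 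Granted that step, Part 2 reduces to the same ``unique limit'' trick used in Part 1, now in $(X^m,d^m)$.
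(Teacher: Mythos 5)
Your Part 1 is essentially the paper's own argument: the authors likewise take the constant sequence $y_n=y$, note that it is a convergent Cauchy sequence with limit $y$, observe that $d(x,y)=0$ makes $x$ a second limit, and invoke the $C$-distance property; your forward phrasing versus their argument by contradiction is immaterial.

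For Part 2 you have put your finger on the genuine issue, and you should know that the paper does not resolve it: its entire proof of assertion 2 is the sentence ``In the conditions of assertion 2, we have $\lambda F(b)=b$.'' No continuity hypothesis appears in the statement, and without one the assertion is false as written. Take $X=[0,1]$ with the usual metric (a metric space, hence a $C$-distance space), $m=1$, $\lambda_1$ the identity, and $F(x)=x/2$ for $x>0$, $F(0)=1$; the Picard sequence starting at $1$ is $1,\tfrac12,\tfrac14,\dots$, a convergent Cauchy sequence with limit $0$, yet $F(0)=1\neq 0$. So your diagnosis is correct: one must either add (sequential) continuity of $\lambda F$ at $b$ to the hypotheses, or apply the proposition only to operators for which this is automatic --- which is what happens in Theorems \ref{T4.1}--\ref{T4.3}, where $F$ is a $\lambda$- or $\bar{\lambda}$-contraction and Propositions \ref{P3.1} and \ref{P3.2} give $\rho(\lambda F(a(n)),\lambda F(b))\le k\,\rho(a(n),b)\to 0$ for $\rho\in\{d^m,\bar d^m\}$. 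Then $b$ and $\lambda F(b)$ are both limits of the convergent Cauchy sequence $\{a(n+1)\}$ in the $C$-distance space $(X^m,\rho)$ (Proposition \ref{P2.1}(7)), so $\lambda F(b)=b$, and equality of tuples already gives the coordinatewise identities (your appeal to Part 1 is a harmless alternative route to the same conclusion). Read as a proof under that added regularity hypothesis, your argument is complete and supplies exactly the justification the paper omits.
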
    

\begin{proof}  Assume that $x, y $ are two distinct points of $X$ and $d(x,y) = 0$. Then the points $x, y$ are both limits
of the Cauchy sequence $\{y_n = y: n \in \mathbb N\}$, a contradiction. So, assertion 1 is proved. 

In the conditions of assertion 2, we have $\lambda F(b)$ = $b$.
 \end{proof}

\begin{coro}\label{C4.1}    Let $(X, d)$ be a complete $C$-distance space, $\rho  \in \{d^m, \bar{d}^m\}$, $k > 0$ 
and $F : X^m \longrightarrow X$ be an operator with the following properties:
 \begin{enumerate}
\item there exists $k > 0$ such that $d(F(x), F(y)) < k \rho (x,y)$ for all distinct points $x, y \in X^m$;
\item if $x \in X^m$, then
   the  Picard sequence  $\{x_n \in X: n \in \mathbb N\}$, of $F$ at the point $x$, is a  Cauchy sequence.
\end{enumerate}
     Then:
     
     1. The operators $F$ and $\lambda F$ are continuous.
     
     2. The set $Fix (F)$ of the multidimensional fixed points of $F$ is closed in $X^m$ and non-empty.
     
     3. If  $k \leq 1$, then $F $ has a unique  multidimensional fixed point. \end{coro}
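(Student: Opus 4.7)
The plan is to treat the three conclusions in sequence, exploiting the Lipschitz-type machinery of Propositions 3.1 and 3.2, the Picard-convergence observation of Proposition 4.1(2), and the $C$-distance argument used in the proof of Proposition 4.1(1).

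For assertion 1, hypothesis~(1) already yields that $F\colon (X^m,\rho)\to(X,d)$ is strictly Lipschitz with constant $k$, hence continuous; for the companion operator $\lambda F$, I would invoke Proposition 3.1 when $\rho=d^m$ and Proposition 3.2 when $\rho=\bar{d}^m$ to lift the coordinate-wise estimate to $\rho(\lambda F(a),\lambda F(b))\le K\rho(a,b)$ for a constant $K$ depending on $k$ (and possibly on $m$ in the $\bar{d}^m$ case), which gives continuity of $\lambda F$ on $(X^m,\rho)$. For existence in assertion 2, pick any $a\in X^m$; hypothesis~(2) makes the Picard sequence $O(F,\lambda,a)$ Cauchy in $(X^m,\rho)$, Proposition 2.1(8) transfers completeness from $(X,d)$ to $(X^m,\rho)$ so this sequence converges to some $b\in X^m$, and Proposition 4.1(2) identifies $b$ as a $\lambda$-multiple fixed point.

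Closedness of $\operatorname{Fix}(F)$ is more delicate: if $b^{(n)}\in\operatorname{Fix}(F)$ and $b^{(n)}\to b$, continuity of $\lambda F$ gives $b^{(n)}=\lambda F(b^{(n)})\to\lambda F(b)$, so the single sequence $\{b^{(n)}\}$ tends simultaneously to $b$ and to $\lambda F(b)$; I would argue, in the style of Proposition 4.1(1), that the $C$-distance axiom combined with hypothesis~(2) forces $\lambda F(b)=b$. For uniqueness in assertion 3, if $a\ne b$ are two fixed points, then ($ii_m$) provides an index $i$ with $d(a_i,b_i)>0$; using that $a_i=F(a_{\lambda_i(1)},\ldots,a_{\lambda_i(m)})$ and the analogous identity for $b$, the strict inequality in~(1) gives $d(a_i,b_i)<k\rho(a,b)$, and taking the supremum (for $\rho=d^m$) or sum (for $\rho=\bar{d}^m$) over $i$ produces $\rho(a,b)<K\rho(a,b)$ with $K\le 1$, the desired contradiction, so that Proposition 4.1(1) forces $a=b$.

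The main obstacle is the closedness step: the $C$-distance axiom only guarantees unique limits for convergent \emph{Cauchy} sequences, so I cannot immediately equate the two candidate limits $b$ and $\lambda F(b)$ of $\{b^{(n)}\}$. My plan is to produce a Cauchy witness — most naturally the Picard sequence $\{\lambda F^n(b)\}$, which is Cauchy by~(2) and convergent in the complete space $(X^m,\rho)$ — and then use continuity of $\lambda F$ together with $C$-distance uniqueness to squeeze both $b$ and $\lambda F(b)$ into agreement with its unique limit. A secondary subtlety appears in the $\bar{d}^m$ case of assertion 3, where the naive passage from~(1) to a bound on $\rho$ introduces an unwanted factor of $m$ that must be absorbed using the strictness in~(1).
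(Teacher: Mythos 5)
The paper states this corollary without any proof, so there is no official argument to compare against; judged on its own, your plan is sound for assertion~1, for the non-emptiness half of assertion~2, and for assertion~3 in the case $\rho=d^m$ (there the finiteness of the index set makes the supremum of strictly smaller quantities strictly smaller, and the contradiction $d^m(a,b)<d^m(a,b)$ is genuine). However, the two points you yourself flag as delicate are real gaps, and the patches you sketch cannot close them because both sub-statements fail under the stated hypotheses. For closedness, your ``Cauchy witness'' is circular: the Picard sequence $\{(\lambda F)^n(b)\}$ is Cauchy and converges to some fixed point $c$, but nothing identifies $c$ with $b$ unless $b$ is already fixed, which is what you are trying to prove; the sequence $\{b^{(n)}\}$ that actually carries the two candidate limits $b$ and $\lambda F(b)$ is merely convergent, and in a $C$-distance space a convergent non-Cauchy sequence may have several limits. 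This cannot be repaired: take $m=1$, $\lambda_1=\id$, $X=\{a,b\}\cup\{c_n:n\in\Nb\}$ with $d$ symmetric, $d(a,c_n)=d(b,c_n)=1/n$ and all other distances between distinct points equal to $1$, and $f(a)=f(b)=b$, $f(c_n)=c_n$. Then $(X,d)$ is a complete $C$-distance space, hypothesis~(1) holds with $k=2$ and hypothesis~(2) holds because every Picard sequence is eventually constant, yet $c_n\to a$ while $f(a)=b\neq a$, so $Fix(f)$ is not closed. Some separation hypothesis (for instance that $d$ is an $H$-distance, so that all convergent sequences have unique limits as in Remark~\ref{rm1.2}) must be added before assertion~2 can be proved.

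The second gap is assertion~3 for $\rho=\bar{d}^m$. Summing the strict inequalities $d(a_i,b_i)<k\sum_{j}d(a_{\lambda_i(j)},b_{\lambda_i(j)})$ over $i$ only yields $\bar{d}^m(a,b)<k\sum_i\sum_j d(a_{\lambda_i(j)},b_{\lambda_i(j)})$, and without the surjectivity condition on the $\lambda_i$ (the hypothesis of Proposition~\ref{P3.2} and Corollary~\ref{C3.2}, which is absent from this corollary) the right-hand side can be as large as $km^2\,\bar{d}^m(a,b)$; strictness replaces ``$\leq$'' by ``$<$'' but cannot pull a constant of order $km^2$ below $1$. Concretely, take $m=2$, $X=\Rb$ with the usual metric, $\lambda_1\equiv 1$, $\lambda_2\equiv 2$, and $F(u,v)=(u+v)/2$: then $d(F(x),F(y))\leq\tfrac12\bar{d}^2(x,y)<\bar{d}^2(x,y)$ for all distinct $x,y$, every Picard sequence of $\lambda F=\id_{X^2}$ is constant, and every point of $X^2$ is a $\lambda$-multiple fixed point, so uniqueness fails with $k=1$. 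Your argument for this case therefore cannot be completed as written; the $\bar{d}^m$ half of assertion~3 needs the surjectivity (or equivalent) condition on each $\lambda_i$, exactly as in part~2 of Theorem~\ref{T4.1}, added to the hypotheses.
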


\begin{thm}\label{T4.1}      Let $(X, d)$ be a $(F,\lambda )$-bounded complete $C$-distance space and $F : X^m \longrightarrow X$ be an operator.

1. If $F$ is a $\lambda $-contraction,  then any Picard sequence of the operator $\lambda F$ is a convergent Cauchy sequence and 
$F $ has a unique  multidimensional fixed point.

2. If $F$ is a $\bar{\lambda }$-contraction and for any $i \in  \{1, 2, ..., m\}$ the mapping $\lambda _i$ is a surjection
or, more generally, $|\cup \{\lambda _i^{-1}(j): j \leq n\}| = m$ for each $i \in  \{1, 2, ..., m\}$, then  any Picard sequence 
of the operator $\lambda F$ is a convergent Cauchy sequence and 
$F $ has a unique  multidimensional fixed point.
\end{thm}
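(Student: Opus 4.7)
The plan is to reduce both assertions to a Picard-iteration argument on the product space $(X^m, d^m)$ (respectively $(X^m, \bar{d}^m)$), invoking the machinery built up in Sections 2 and 3. First, by Proposition \ref{P2.1} (parts 7 and 8), the product space inherits both completeness and the $C$-distance property from $(X,d)$. Second, Corollary \ref{3C.1} for part 1, and Corollary \ref{C3.2} for part 2 (where the surjectivity/covering hypothesis on the $\lambda_i$'s is exactly what lets one pass from a $\bar{\lambda}$-contraction of $F$ to a contraction of $\lambda F$), give that $\lambda F$ is a genuine contraction on the appropriate product space with some constant $k \in [0,1)$. This shifts the problem from finding a $\lambda$-multiple fixed point of $F$ to finding an ordinary fixed point of $\lambda F$ on a complete $C$-distance space.

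The main technical step is to show that every Picard sequence $O(F,\lambda,a) = \{a(n)\}$ is Cauchy. The obstacle here is that a general distance space carries no triangle inequality, so the classical telescoping bound of the form $\sum_j k^j d^m(a(j), a(j+1))$ is unavailable. Instead I would iterate the contraction directly: for any $n, p \in \mathbb{N}$,
\[
d^m(a(n), a(n+p)) = d^m((\lambda F)^n(a), (\lambda F)^n(a(p))) \leq k^n \, d^m(a, a(p)),
\]
and analogously for $d^m(a(n+p), a(n))$. The $(F,\lambda)$-boundedness hypothesis is exactly what guarantees $\sup_p \{d^m(a, a(p)) + d^m(a(p), a)\} < \infty$, so both right-hand sides tend to $0$ as $n \to \infty$, uniformly in $p$. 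Hence $\{a(n)\}$ is Cauchy in the complete product space and therefore converges; its limit is unique thanks to the $C$-distance property.

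Applying Proposition \ref{P4.1}(2) to this convergent Cauchy sequence yields the existence of a $\lambda$-multiple fixed point of $F$. For uniqueness, if $b, c \in X^m$ are two such fixed points, then $\lambda F(b) = b$ and $\lambda F(c) = c$, and the contraction inequality forces $d^m(b,c) \leq k \, d^m(b,c)$, hence $d^m(b,c) = 0$. By the definition of $d^m$ this gives $d(b_i, c_i) = 0$ for each coordinate $i$, and Proposition \ref{P4.1}(1) then yields $b_i = c_i$ for every $i$, so $b = c$. Part 2 follows by the identical argument after replacing $d^m$ by $\bar{d}^m$ throughout, the covering hypothesis on the $\lambda_i$'s being needed solely to invoke Corollary \ref{C3.2}.
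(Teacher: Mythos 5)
Your proposal is correct, and its skeleton coincides with the paper's: both reduce the problem to the one-dimensional fixed point problem for $\lambda F$ on the product space $(X^m,\rho)$, $\rho\in\{d^m,\bar d^m\}$, using Proposition \ref{P2.1} to transfer completeness and the $C$-distance property and Propositions \ref{P3.1}/\ref{P3.2} (equivalently, Corollaries \ref{3C.1} and \ref{C3.2}) to see that $\lambda F$ is a contraction. The difference is in how the one-dimensional principle is then settled: the paper simply cites Proposition 3.4 of \cite{C1} as a black box, whereas you prove it from scratch. Your key estimate $d^m(a(n),a(n+p))=d^m((\lambda F)^n(a),(\lambda F)^n(a(p)))\le k^n\,d^m(a,a(p))$, combined with the $(F,\lambda)$-boundedness of the orbit, is exactly the right way to obtain the Cauchy property in the absence of any triangle inequality, and it makes transparent why the boundedness hypothesis appears in the theorem at all --- something the paper's two-line proof leaves implicit. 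Your handling of the limit (via Proposition \ref{P4.1}(2)) and of uniqueness (via the contraction inequality and Proposition \ref{P4.1}(1)) is also sound, since $d^m(b,c)\le k\,d^m(b,c)$ with $k<1$ forces $d^m(b,c)=d^m(c,b)=0$ and hence $b=c$ in a $C$-distance space. In short: same reduction, but a self-contained and more informative treatment of the core convergence step.
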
    

\begin{proof}   Let $\rho $ = $d^m$ in the conditions of Assertion 1 and  $\rho $ = $\bar{d^m}$ in the conditions of Assertion 2.
From Propositions \ref{P3.1} and \ref{P3.2}, respectively, it follows that $\lambda F$ is a contraction on the complete
$C$-distance space $(X^m, \rho )$. Proposition 3.4 from  \cite{C1} ensures that the operator $\lambda F$ has a unique
fixed point which is a  multidimensional fixed point of $F$.  
 \end{proof}
 
 \begin{thm}\label{T4.2}     Let $(X, d)$ be an $N$-symmetric space and $F : X^m \longrightarrow X$ be an operator. 
 
1. If $F$ is a  $\lambda $-contractive operator and  for each point $x \in X^m$ the  Picard sequence  $O(F,\lambda , x)$ = 
$\{x_n : n \in \mathbb N\}$ 
has an accumulation point and  $\lim\limits_{n\rightarrow \infty }d^m(x_n,x_{n+1}) = 0$,  then any Picard sequence of the operator $\lambda F$ is 
a convergent Cauchy sequence and 
$F $ has a unique  multidimensional fixed point.

2. If $F$ is a $\bar{\lambda }$-contractive operator and, for any $i \in  \{1, 2, ..., m\}$, the mapping $\lambda _i$ is a surjection
or, more generally, $|\cup \{\lambda _i^{-1}(j): j \leq m\}| = m$ for each $i \in  \{1, 2, ..., m\}$ and for each point $x \in X^m$ the  Picard sequence  $O(F,\lambda , x)$ = 
$\{x_n: n \in \mathbb N\}$ 
has an accumulation point and  $\lim\limits_{n\rightarrow \infty }\bar{d^m}(x_n,x_{n+1}) = 0$,  then any Picard sequence of the operator $\lambda F$ is 
a convergent Cauchy sequence and 
$F $ has a unique  multidimensional fixed point.
  
3. $d$ is an $H$-distance and any Picard sequence has a unique accumulation point.
\end{thm}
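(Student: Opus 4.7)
The plan is to reduce both Parts 1 and 2 to a single one-dimensional claim by transferring the problem from $X$ to the product space $(X^m, \rho)$, where $\rho = d^m$ for Part 1 and $\rho = \bar{d}^m$ for Part 2. By Proposition~\ref{P2.1}(5) these product spaces are again $N$-symmetric. By definition, $F$ is $\lambda$-contractive precisely when $\lambda F : X^m \to X^m$ is a $d^m$-contractive selfmap of $(X^m, d^m)$, and analogously $\bar{\lambda}$-contractive $F$ gives a $\bar{d}^m$-contractive $\lambda F$; the surjectivity/cover hypothesis in Part 2 matches the framework of Proposition~\ref{P3.2} and will be used when translating componentwise estimates into aggregate ones. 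The hypotheses on the existence of accumulation points and on $\lim_n \rho(x_n, x_{n+1}) = 0$ for every Picard sequence of $\lambda F$ are exactly what is needed for the one-dimensional argument, and a $\lambda$-multiple fixed point of $F$ is precisely a fixed point of $\lambda F$.

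It therefore suffices to prove the following one-dimensional statement: if $(Y, \rho)$ is an $N$-symmetric space and $\psi : Y \to Y$ is contractive, with every Picard sequence $\{x_n\}$ admitting an accumulation point and satisfying $\lim_n \rho(x_n, x_{n+1}) = 0$, then every Picard sequence converges to the unique fixed point of $\psi$. Take a subsequence $x_{n_k} \to a$. Contractivity gives $\rho(\psi(x_{n_k}), \psi(a)) \le \rho(x_{n_k}, a) \to 0$, hence $x_{n_k+1} \to \psi(a)$. Applying the $N$-property at $a$ to $\rho(a, x_{n_k}) \to 0$ and $\rho(x_{n_k}, x_{n_k+1}) \to 0$ yields $x_{n_k+1} \to a$; a second application of the $N$-property at $a$ to $\rho(a, x_{n_k+1}) \to 0$ and $\rho(x_{n_k+1}, \psi(a)) \to 0$ (via symmetry) forces $\rho(a, \psi(a)) \le \varepsilon$ for every $\varepsilon > 0$, so $\rho(a, \psi(a)) = 0$ and $a = \psi(a)$ by axiom $(ii_m)$. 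Contractivity then makes $\{\rho(x_n, a)\}$ non-increasing, and since a subsequence tends to $0$ the whole sequence does, i.e.\ $x_n \to a$. Uniqueness is immediate: two distinct fixed points $a, a'$ would give $\rho(a, a') = \rho(\psi(a), \psi(a')) < \rho(a, a')$, a contradiction.

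The main obstacle is the Cauchy conclusion, because by Remark~\ref{rm1.1} convergent sequences in an $N$-symmetric space need not be Cauchy in general. The plan here is to iterate the $N$-property at the fixed point $a$: given $\varepsilon > 0$, successively choose thresholds $\delta_1 \ge \delta_2 \ge \cdots$ by repeated application of the $N$-property at $a$, and combine them with $\rho(a, x_n) \to 0$, the asymptotic regularity $\rho(x_n, x_{n+1}) \to 0$, symmetry, and the monotonic decrease of $\rho(x_n, a)$ to bound $\rho(x_n, x_m)$ for $n, m$ large. The delicate point is that the threshold in the $N$-property depends on the base point and is not uniform, so careful bookkeeping is required here.

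For Part 3, once the one-dimensional claim is in hand the unique-accumulation-point assertion follows immediately: the argument above shows that any accumulation point of a Picard sequence of $\lambda F$ is a fixed point of $\lambda F$, hence equal to the unique such point. The $H$-distance conclusion follows from Remark~\ref{rm1.2} by the same mechanism: if a convergent sequence had two distinct limits, iterating the $N$-property at both putative limits as in the step establishing $a = \psi(a)$ would force a distance of $0$ between them, contradicting axiom $(ii_m)$.
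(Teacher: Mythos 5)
Your reduction to the one-dimensional problem on $(X^m,\rho)$ with $\rho\in\{d^m,\bar d^m\}$ is exactly the paper's strategy; the difference is that the paper then simply invokes Theorem 4.1 of \cite{C1} for contractive, strongly asymptotically regular selfmaps of an $N$-symmetric space whose Picard sequences have accumulation points, whereas you attempt to prove that one-dimensional statement from scratch. The parts you actually carry out are correct: extracting a subsequence $x_{n_k}\to a$, using contractivity plus symmetry to get $x_{n_k+1}\to\psi(a)$, two applications of the $N$-property at $a$ to conclude $\rho(a,\psi(a))=0$ and hence $a=\psi(a)$ by $(ii_m)$, the monotonicity of $\rho(x_n,a)$ to upgrade subsequential convergence to convergence of the whole Picard sequence, the uniqueness argument, and the Part 3 deductions (uniqueness of accumulation points from uniqueness of the fixed point, and the $H$-property via Remark \ref{rm1.2}) are all sound.

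The genuine gap is the Cauchy conclusion, which is an explicit part of the statement and which you leave as an unexecuted ``plan.'' Moreover, the plan as described cannot work: every application of the $N$-property based at the fixed point $a$ has the form ``$\rho(a,y)\le\delta$ and $\rho(y,z)\le\delta$ imply $\rho(a,z)\le\varepsilon$,'' so however you iterate and nest the thresholds $\delta_1\ge\delta_2\ge\cdots$ you only ever obtain bounds on distances \emph{from $a$}, never on $\rho(x_n,x_m)$ with both indices moving. To control $\rho(x_n,x_m)$ you would need the $N$-property based at $x_n$, and there the threshold $\delta(x_n,\varepsilon)$ varies with $n$ with no uniformity available --- precisely the obstruction recorded in Remark \ref{rm1.1}, that in $N$-symmetric spaces convergent sequences need not be Cauchy. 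So the convergence $x_n\to a$, which you do establish, does not by itself yield the Cauchy property; whatever closes this step (presumably the argument inside Theorem 4.1 of \cite{C1}) must use more than your sketch does, and as written your proof of this clause is incomplete.
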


\begin{proof}  Assertion 3 follows immediately from Theorem 4.1 from \cite{C1}. Let $\rho $ be the symmetric constructed in the
proof of Theorem \ref{T4.1}. Then $\lambda F$ is a  strongly asymptotically regular contractive mapping on the $N$-symmetric space $(X^m, \rho )$ and, 
for each point $x \in X^m$, the  Picard sequence  $O(F, \lambda , x)$ 
has an accumulation point. Now, Theorem 4.1 from \cite{C1} completes  the proof.
 \end{proof}

\begin{coro}\label{C4.1} Let $(X, d)$ be an $N$-symmetric compact space and $F : X^m \longrightarrow X$ be an operator.

1. If $F$ is a $\lambda $-contraction,  then any Picard sequence of the operator $\lambda F$ is a convergent Cauchy sequence and 
$F $ has a unique  multidimensional fixed point.

2. If $F$ is a $\bar{\lambda }$-contraction and for any $i \leq m$ the mapping $\lambda _i$ is a surjection
or, more generally, $|\cup \{\lambda _i^{-1}(j): j \leq m\}| = m$, for each $i   \leq m$, then  any Picard sequence 
of the operator $\lambda F$ is a convergent Cauchy sequence and 
$F $ has a unique  multidimensional fixed point.   
 \end{coro}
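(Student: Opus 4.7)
The plan is to derive this corollary directly from Theorem \ref{T4.2}, observing that compactness of $X$ supplies, for free, the two hypotheses that Theorem \ref{T4.2} imposes beyond the contractive property, namely the existence of an accumulation point of every Picard sequence and the asymptotic regularity condition $\lim\limits_{n\to\infty} d^m(x_n,x_{n+1}) = 0$ (resp.\ the same for $\bar{d}^m$).

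First, I would pass to the product distance space $(X^m, d^m)$ (resp.\ $(X^m,\bar{d}^m)$). By Proposition \ref{P2.1}(5), the $N$-symmetric property is preserved, and because the topology $\mathcal T(d^m)$ coincides with the finite product of $\mathcal T(d)$ and distance spaces are sequential, compactness is also preserved. Consequently, every sequence in $X^m$ has an accumulation point; in particular every Picard orbit $O(F,\lambda,x)$ does, which verifies the first auxiliary hypothesis of Theorem \ref{T4.2}.

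Next, since a $\lambda$-contraction with constant $k<1$ is automatically $\lambda$-contractive (strict inequality whenever $d^m(x,y)>0$), only the asymptotic regularity of the Picard orbit remains to check. In the setting of part 1, Corollary \ref{3C.1} yields that $\lambda F$ is an honest contraction on $(X^m,d^m)$ with constant $k$, so iterating gives $d^m(x_{n+1},x_{n+2}) \le k^{n+1} d^m(x_0,x_1)$, which tends to $0$. For part 2, the surjection (or fiber-covering) hypothesis on the family $\lambda$ lets me invoke Corollary \ref{C3.2} to obtain the same contraction property on $(X^m,\bar{d}^m)$ with constant $k$, hence the analogous asymptotic regularity for $\bar{d}^m$. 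With both hypotheses of Theorem \ref{T4.2} verified, the theorem delivers in each case that every Picard sequence of $\lambda F$ is a convergent Cauchy sequence and that $F$ admits a unique multidimensional fixed point. The only delicate point I foresee is the reduction to sequence accumulation points in $X^m$, which relies on the fact that distance spaces are sequential so that compactness of $X$ translates into the sequential compactness used by Theorem \ref{T4.2}; everything else is a routine transfer via the contractive estimates of Propositions \ref{P3.1} and \ref{P3.2}.
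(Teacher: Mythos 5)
Your derivation is correct and is exactly the intended one: the paper states this corollary without proof immediately after Theorem \ref{T4.2}, precisely because compactness supplies the accumulation points while the contraction constant $k<1$ gives both the strict contractivity and the asymptotic regularity $d^m(x_n,x_{n+1})\le k^n d^m(x_0,x_1)\to 0$ (resp.\ for $\bar d^m$, via Corollary \ref{C3.2}). Your remark on transferring compactness to $(X^m,d^m)$ is the right point to be careful about, and Proposition \ref{P2.1} together with the sequentiality of distance spaces handles it as you say.
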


  The problem of the existence of fixed points for contracting mappings on $F$-symmetric spaces was first studied in \cite{BC2}. 
The following statement improves the fixed point theorems of S. Czerwik \cite{Cz} and 
I. A. Bakhtin \cite{Ba} (see also \cite{RP}). 

\begin{thm}\label{T4.3}     Let $(X, d)$  be a complete $s$-distance symmetric space and $F : X^m \longrightarrow X$ be an operator. 

1. If $F$ is a $\lambda$-contraction,  then any Picard sequence of the operator $\lambda F$ is a convergent Cauchy sequence and 
$F $ has a unique  multidimensional fixed point.

 2. If $F$ is a $\bar{\lambda }$-contraction and, for any $i \leq m$, the mapping $\lambda _i$ is a surjection
or, more generally, $|\cup \{\lambda _i^{-1}(j): j \leq m\}| = m$, for each $i   \leq m$, then  any Picard sequence 
of the operator $\lambda F$ is a convergent Cauchy sequence and 
$F $ has a unique  multidimensional fixed point.
\end{thm}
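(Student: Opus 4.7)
The plan is to carry out the same reduction used in the proof of Theorem~\ref{T4.1}, only replacing the underlying one-dimensional fixed point theorem by its $s$-distance-symmetric analogue. For assertion~1 I set $\rho = d^m$, and for assertion~2 I set $\rho = \bar{d}^m$; throughout I work on the product space $(X^m, \rho)$.

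First I would verify that $(X^m, \rho)$ inherits the hypotheses of the theorem. By Proposition~\ref{P2.1}(1), (8), and (9), if $(X, d)$ is complete, symmetric, and an $s$-distance space with constant $s$, then $(X^m, d^m)$ and $(X^m, \bar{d}^m)$ are also complete, symmetric, and $s$-distance spaces with the same constant $s$. Next I would apply Corollary~\ref{3C.1} in case~1, and Corollary~\ref{C3.2} in case~2, to conclude that $\lambda F : X^m \to X^m$ is a contraction on $(X^m, \rho)$ with the same constant $k \in [0,1)$ as $F$. In case~2, the surjectivity hypothesis (or the more general condition $|\bigcup_j \lambda_i^{-1}(j)| = m$) is exactly what is needed in Proposition~\ref{P3.2} to collapse $\sum_i \sum_j d(a_{\lambda_i(j)}, b_{\lambda_i(j)})$ into $m \sum_j d(a_j, b_j)$.

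Once $\lambda F$ is a contraction on the complete $s$-distance symmetric space $(X^m, \rho)$, I would apply the Banach-type contraction principle for such spaces --- the one-dimensional fixed point theorem of Bakhtin--Czerwik (see \cite{Ba}, \cite{Cz}, and the refinement in \cite{BC2}) --- to obtain a unique fixed point $b = (b_1, \ldots, b_m)$ of $\lambda F$, to which every Picard sequence of $\lambda F$ converges and which is moreover Cauchy. Unwinding the definition of $\lambda F$, the equation $b = \lambda F(b)$ reads $b_i = F(b_{\lambda_i(1)}, \ldots, b_{\lambda_i(m)})$ for every $i \leq m$, so $b$ is the unique $\lambda$-multiple fixed point of $F$.

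The only substantive ingredient is the one-dimensional Banach principle on complete $s$-distance symmetric spaces. The subtlety there is the weakened triangle inequality $\rho(x,y) \leq s[\rho(x,z) + \rho(z,y)]$, which introduces powers of $s$ when telescoping $\rho((\lambda F)^n x, (\lambda F)^{n+p} x)$; the standard way around this is to combine the geometric decay $\rho((\lambda F)^n x, (\lambda F)^{n+1} x) \leq k^n \rho(\lambda F x, x)$ with a careful iteration of the $s$-inequality to produce a summable majorant. Since this argument is already available in the cited literature, no new obstacle arises beyond the reduction effected by Propositions~\ref{P3.1} and~\ref{P3.2}.
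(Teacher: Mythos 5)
Your proposal is correct and follows essentially the same route as the paper: reduce to the product space $(X^m,\rho)$ with $\rho\in\{d^m,\bar d^m\}$ via Proposition~\ref{P2.1} (completeness, symmetry, and the $s$-distance property all pass to the product) and Propositions~\ref{P3.1}--\ref{P3.2}, then invoke the one-dimensional contraction principle on complete symmetric $s$-distance spaces --- the paper cites Theorem 4.2 of \cite{C1}, while you cite the equivalent Bakhtin--Czerwik result refined in \cite{BC2}. Your closing remark about controlling the powers of $s$ in the telescoping estimate correctly identifies the only delicate point, which both you and the paper delegate to the cited one-dimensional theorem.
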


\begin{proof} Let $\rho $ be the symmetric constructed in the
proof of Theorem  \ref{T4.1}. By virtue of Proposition \ref{P2.1}, $\rho $ is a symmetric $s$-distance. 
Then $\lambda F$ is a  contractive mapping of the $s$-symmetric space $(X^m, \rho )$. Now, Theorem 4.2 from \cite{C1} completes  the proof. 
 \end{proof}

\section{Some particular cases and conclusions}

If we take concrete values of $m\in \mathbb{N}$ and consider various particular functions  $\lambda $ = $\{\lambda _i: \{1, ..., m\} \rightarrow  \{1, ..., m\}: 1\leq i \leq m\}$ then, most of the results in literature dedicated to coupled, triple, quadruple,... fixed point theory, are  obtained as particular cases of the multiple fixed point theorems established in the present paper. 

For example, if $m=2$, $\lambda_1(1)=1$, $\lambda_1(2)=2$; $\lambda_2(1)=2$, $\lambda_2(2)=1$, by our main results we obtain  the coupled fixed point theorems in \cite{Bha} and in various subsequent papers, see especially the singular paper \cite{Sab}, where the setting is a (cone) metric space without any order relation. 

If $m=3$, $\lambda_1(1)=1$, $\lambda_1(2)=2$, $\lambda_1(3)=3$; $\lambda_2(1)=2$, $\lambda_2(2)=1$, $\lambda_2(3)=2$; $\lambda_3(1)=3$, $\lambda_3(2)=2$, $\lambda_3(3)=1$, then the concept of multiple fixed point studied in the present paper reduces to that of triple fixed point, first introduced in \cite{BB1} and intensively studied in many other research works emerging from it.  

We note that, as pointed out in \cite{Sha},  the notion of tripled fixed point due to Berinde and Borcut \cite{BB1} is different from the one defined by Samet and Vetro \cite{SV} for $m = 3$, since in the case of ordered metric spaces in order to keep the mixed monotone property working, it was necessary to take $\lambda_2(3)=2$ and not $\lambda_2(3)=3$.

We mention one more important particular case, i.e., the one of fixed point of $N$-order introduced and studied in  \cite{SV}, which is obtained as particular case of our concept introduced in the present paper, by taking $m=N$, $\lambda_1=$ the identity permutation of $\{1,2,\dots,N\}$ and, for $i\geq 2$, $\lambda_i$ is the cyclical permutation of $\{1,2,\dots,N\}$ that starts with $\lambda_i (1)=i$, i.e., for example, $\lambda_2(1)=2$, $\lambda_2(2)=3$,\dots,$\lambda_2(N-1)=N$, $\lambda_2(N)=1$. Note that in this case the family of mappings  $\lambda $ = $\{\lambda _i: \{1, ..., N\} \rightarrow  \{1, ..., N: i \leq N\}$ satisfies both alternative conditions imposed in Theorems \ref{T4.1}, \ref{T4.2}, \ref{T4.3}, Proposition \ref{P3.2} and Corollaries \ref{C3.2}, \ref{C4.1}, i.e., $\lambda _i$ is a surjection and $|\cup \{\lambda _i^{-1}(j): 1\leq j \leq N\}| = N$, for each $i   \leq N$. 

For other concepts of multiple fixed points considered in literature the condition " $\lambda _i$ is a surjection, for each $i   \leq m$" is no more valid, see for example \cite{BB1} and the research papers emerging from it, while the second condition, $|\cup \{\lambda _i^{-1}(j): 1\leq j \leq m\}| = m$, for each $i   \leq m$, is satisfied.

As the great majority of the papers dealing with coupled, triple, quadruple,..., multiple fixed points  were established in ordered metric spaces or generalised order metric spaces, we shall study them separately in a forthcoming paper \cite{ChoB}, where the basic setting will be an ordered distance space.

We point out the fact that the main idea of this paper was to obtain general multiple fixed point theorems by reducing this problem to a unidimensional fixed point problem and by simultaneously working in a more general and very reliable setting, that of distance spaces. Many other related and relevant results could be obtained in the same way, by reducing the multidimensional fixed point problem to many other independent unidimensional fixed point principles, like the ones established in  \cite{Ber13},  \cite{Ber09},  \cite{Ber09a},  \cite{Ber10},  \cite{Ber11a},  \cite{Ber12},  \cite{BC1},  \cite{BP},  \cite{BerP15} etc.

\section*{Acknowledgements}

This paper has been finalised during the second author's visit to Department of Mathematics and Statistics, King Fahd University of Petroleum and Minerals, Dhahran, Saudi Arabia, in the period December 2016-January 2017. He  would like to acknowledge the support provided by the Deanship of Scientific Research at King Fahd University of Petroleum and Minerals for funding this work through the projects IN151014 and IN151017.

\vskip 0.25 cm {\it $^{a}$ Department of Physics, Mathematics and Information
Technologies\newline
\indent Tiraspol State University\newline 
\indent  Gh. Iablocikin 5., MD2069 Chi\c sin\u au, Republic of Moldova

E-mail: mmchoban@gmail.com }

\vskip 0.5 cm {\it $^{b}$ Department of Mathematics and Computer Science

North University Center at Baia Mare 

Technical University of Cluj-Napoca

Victoriei 76, 430072 Baia Mare ROMANIA

E-mail: vberinde@cunbm.utcluj.ro}

\vskip 0.25 cm {\it $^{c}$ Department of Mathematics and Statistics

King Fahd University of Petroleum and Minerals

Dhahran, Saudi Arabia

E-mail: vasile.berinde@gmail.com}

\begin{thebibliography}{99}

\bibitem{Aga14} Agarwal, R., Karapinar, E.,  Rold\' an-L\' opez-de-Hierro, A.-F., {\it Some remarks on `Multidimensional fixed point theorems for isotone mappings in partially ordered metric spaces'}. Fixed Point Theory Appl. 2014, 2014:245, 13 pp.

\bibitem{Aga} Agarwal, R., Karapinar, E.,  Rold\' an-L\' opez-de-Hierro, A.-F., {\it Fixed point theorems in quasi-metric spaces and applications to multidimensional fixed point theorems on $G$-metric spaces}. J. Nonlinear Convex Anal. 16 (2015), no. 9, 1787--1816.

\bibitem{Agha} Aghajani, A., Abbas, M. and Kallehbasti, E. P., \textit{Coupled fixed point theorems in partially ordered metric spaces and application},
Math. Commun.,  17 (2002), No. 2, 497--509

\bibitem{AghaFPT} Aghajani, A.  and Arab, R., \textit{Fixed points of $(\psi,\varphi,\theta)$-contractivemappings in partially ordered $b$-metric spaces and application to quadratic integral equations}, Fixed Point Theory Appl. \textbf{2013}, 2013:245  doi:10.1186/1687-1812-2013-245

\bibitem{Ber13}  Alghamdi, M. A., Berinde, V., Shahzad, N., {\it Fixed Points of Multivalued Nonself Almost Contractions}, J. Appl. Math.   \textbf{2013}, 2013: 621614.

\bibitem{Algha} Alghamdi, M. A., Hussain, N.  and Salimi, P., \textit{Fixed point and coupled fixed point theorems on $b$-metric-like spaces}, J. Ineq. Appl.  \textbf{2013}, 2013:402  doi:10.1186/1029-242X-2013-402

\bibitem{Al} Al-Mezel, S. A., Alsulami, H. H., Karapinar, E.  and Lopez-de-Hierro, A.-F. R.,   {\it Discussion on "Multidimensional Coincidence Points" via recent publications}, Abstr. Appl. Anal. 2014, Art. ID 287492, 13 pp.

\bibitem{Amini} Amini-Harandi, A., \textit{Coupled and tripled fixed point theory in partially ordered metric spaces with application to initial value problem}, Math. Comput. Model., \textbf{57} (2013), Nos 9--10, 2343--2348.

\bibitem{Aydi} Aydi, H.,  Samet, B. and Vetro, C. \textit{Coupled fixed point results in cone metric spaces for $\tilde{w}$-compatible mappings}, Fixed Point Theory Appl. \textbf{2011}, 2011:27  doi:10.1186/1687-1812-2011-27

\bibitem{Ba}     Bakhtin, I. A., {\it The contraction mapping principle in almost metric spaces} (in Russian),
        Funct. Anal., Ulianovskii Gosud. Pedag. Inst. {\bf 30} (1989), 26--37.
        
  \bibitem{Ber09}  Berinde, V., \textit{A common fixed point theorem for compatible quasi contractive self mappings in metric spaces}. Appl. Math. Comput. 213 (2009), no. 2, 348--354.
  
 \bibitem{Ber09a}  Berinde, V., \textit{Approximating common fixed points of noncommuting discontinuous weakly contractive mappings in metric spaces}. Carpathian J. Math. 25 (2009), no. 1, 13--22.
 
\bibitem{Ber10}  Berinde, V., \textit{Common fixed points of noncommuting discontinuous weakly contractive mappings in cone metric spaces}. Taiwanese J. Math. 14 (2010), no. 5, 1763--1776.
  
  \bibitem{Ber11}  Berinde, V., \textit{Generalized coupled fixed point theorems for mixed monotone mappings in partially ordered metric spaces}, Nonlinear Anal. \textbf{74} (2011) 7347--7355.
  
 \bibitem{Ber11a}  Berinde, V., \textit{Stability of Picard iteration for contractive mappings satisfying an implicit relation}. Carpathian J. Math. 27 (2011), no. 1, 13--23.
  
  \bibitem{Ber12}  Berinde, V., \textit{Coupled coincidence point theorems for mixed monotone nonlinear operators}. Comput. Math. Appl. {\bf 64} (2012), no. 6, 1770--1777.

\bibitem{Ber12a}  Berinde, V., \textit{Coupled fixed point theorems for $\Phi$-contractive mixed monotone mappings in partially ordered metric spaces}. Nonlinear Anal. {\bf 75} (2012), no. 6, 3218--3228.

  \bibitem{BB1}  Berinde, V., Borcut, M., {\it Tripled fixed point theorems for contractive type mappings in partially ordered metric spaces}. Nonlinear Anal. 74 (2011), no. 15, 4889--4897.
       
\bibitem{BC1}   Berinde, V. and  Choban, M. M.,  {\it Remarks on some completeness conditions involved in
several common fixed point theorems},  Creat. Math. Inform. {\bf 19} (2010), no. 1, 1--10.

\bibitem{BC2}  Berinde, V. and   Choban, M. M.,  {\it Generalized distances and their associate metrics.
Impact on fixed point theory},  Creat. Math. Inform. {\bf 22}
(2013), no. 1, 23--32.


\bibitem{BP} Berinde, V., P\u acurar, M., {\it Coupled fixed point theorems for generalized symmetric Meir-Keeler contractions in ordered metric spaces}. Fixed Point Theory Appl. 2012, 2012:115, 11 pp.

\bibitem{JNCA} Berinde, V., P\u acurar, M., {\it Coupled and triple fixed points theorems for mixed monotone almost contractive mappings in partially ordered metric spaces} (submitted)

\bibitem {BerP15} Berinde, V., P\u acurar, M., \textit{A constructive approach to coupled fixed point theorems in metric spaces}, Carpathian J. Math. {\bf 31} (2015), No. 3, 269--275

\bibitem{BS}  Berzig, M, Samet, B. {\it An extension of coupled fixed points concept in higher dimension and applications}, Comput.
Math. Appl. 63 (2012), 1319--1334.

\bibitem{Bo} Borcut, M.,  {\it  Tripled coincidence theorems for contractive type mappings in partially ordered metric spaces}, Appl.
Math. Comput. 218 (2012), 7339-7346.


\bibitem{MB} Borcut, M., {\it Puncte triple fixe pentru operatori defini\c ti pe spa\c tii metrice par\c tial ordonate},
Risoprint, Cluj-Napoca, 2016.

\bibitem{BB2} Borcut, M., Berinde, V., {\it  Tripled coincidence theorems for contractive type mappings in partially ordered metric spaces},
Appl. Math. Comput. 218 (2012), 5929--5936.




\bibitem{Chang96}  Chang, S.-S.,  Cho, Y. J., Huang, N. J., {\it Coupled fixed point theorems with applications}. J. Korean Math. Soc. 33 (1996), no. 3, 575--585.

\bibitem{Chang91}  Chang, S.-S.,  Ma, Y. H., {\it Coupled fixed points for mixed monotone condensing operators and an existence theorem of the solutions for a class of functional equations arising in dynamic programming}. J. Math. Anal. Appl. 160 (1991), no. 2, 468--479. 

\bibitem{Chen}  Chen, Y. Z., {\it Existence theorems of coupled fixed points}. J. Math. Anal. Appl. 154 (1991), no. 1, 142--150.

\bibitem{C1}  Choban, M., {\it Fixed points of mappings defined on spaces with distance}, Carpathian J. Math. 32 (2016), no. 2, 173--188. 

\bibitem{ChoB}  Choban, M., Berinde, V., {\it A general concept of multiple fixed point for mappings defined on ordered spaces with a distance} (in preparation)

\bibitem{Cir}  \' Ciri\' c, L., Damjanovi\' c, B. , Jleli, M.  and B. Samet, \textit{Coupled fixed point theorems for generalized Mizoguchi-Takahashi contractions with applications}, Fixed Point Theory Appl. 2012, 2012:51  doi:10.1186/1687-1812-2012-51

 \bibitem{Cz}  Czerwik, S., {\it Fixed Points Theorems and Special Solutions of Functional Equations},
               Katowice, 1980.
               
 \bibitem{Dal}    Dalal, S., Khan, L.A., Masmali, I., Radenovic, S., {\it Some remarks on multidimensional fixed point theorems in partially ordered metric spaces}. J. Adv. Math. 7 (2014), no. 1, 1084--1094.
               
\bibitem{Eng}  Engelking, R.,  General topology. Translated from the Polish by the author. Second edition. Sigma Series in Pure Mathematics, 6. Heldermann Verlag, Berlin, 1989.

 \bibitem{Bha} Gnana Bhaskar, T., Lakshmikantham, V., {\it Fixed point theorems in partially ordered metric spaces and applications}. Nonlinear Anal. 65 (2006), no. 7, 1379--1393.
 
\bibitem {Gu}  Gu, F. and Yin, Y., \textit{A new common coupled fixed point theorem in generalized metric space and applications to integral equations}, Fixed Point Theory Appl. \textbf{2013}, 2013:266  doi:10.1186/1687-1812-2013-266

\bibitem {Guo88} Guo, D. J.,  {\it Fixed points of mixed monotone operators with applications}. Appl. Anal. 31 (1988), no. 3, 215--224.

\bibitem{GD}  Granas, A.  and Dugundji, J., {\it Fixed point theory}, Springer,  Berlin, 2003.


\bibitem {Guo}    Guo, D. J., Lakshmikantham, V., {\it Coupled fixed points of nonlinear operators with applications}. Nonlinear Anal. 11 (1987), no. 5, 623--632. 
                
  \bibitem{Hus}                 Hussain, N., Salimi, P.  and Al-Mezel, S., \textit{Coupled fixed point results on quasi-Banach spaces with application to a system of integral equations}, Fixed Point Theory Appl. \textbf{2013}, 2013:261  doi:10.1186/1687-1812-2013-261
  
 \bibitem{Imdad13}   Imdad, M., Soliman, A. H., Choudhury, B. S. and Das, P., {\it On n- tupled coincidence and common fixed points results in metric spaces}. J. Oper.  2013, Article ID 532867, 9 pages.
 
 \bibitem{Imdad13a}   Imdad, M., Sharma, A., Rao, K.P.R., {\it n-tupled coincidence and common fixed point results for weakly contractive mappings in complete metric spaces}. Bull. Math. Anal. Appl. 5(2013), no. 4, 19--39.
 
 \bibitem{Imdad14}   Imdad, M., Alam, A., Soliman, A.H., {\it Remarks on a recent gen- eral even-tupled coincidence theorem}. J. Adv. Math. 9(2014), no. 1, 1787--1805.
 
 
\bibitem{K}    Karapinar, E., {\it Quartet fixed points theorems for nonlinear contractions in partially ordered metric space},
arXiv:1106.5472v1 [math.GN] 27 Jun 2011, 10 p. 
 
\bibitem{KB}    Karapinar E. and Berinde, V., {\it Quadruple fixed points theorems for nonlinear contractions in partially ordered spaces},  Banach J. Math. Anal. 6 (2012), no. 1, 74--89.

\bibitem{Kar13} Karapinar, E., Rold\' an, A., {\it A note on `n-tuplet fixed point theorems for contractive type mappings in partially ordered metric spaces'}. J. Inequal. Appl. 2013, 2013:567, 7 pp.


\bibitem{Kar13a} Karapinar, E., Rold\' an, A., Mart\' inez-Moreno, J., Rold\' an, C., {\it Meir-Keeler type multidimensional fixed point theorems in partially ordered metric spaces}. Abstr. Appl. Anal. 2013, Art. ID 406026, 9 pp.


\bibitem{Lee}  Lee, H., Kim, S., {\it Multivariate coupled fixed point theorems on ordered partial metric spaces}. J. Korean Math. Soc. 51 (2014), no. 6, 1189--1207.

\bibitem{Mutlu} Mutlu, A. G\" urdal, U., {\it An infinite dimensional fixed point theorem on function spaces of ordered metric spaces}. Kuwait J. Sci. \textbf{42} (2015), no. 3, 36--49.
 
\bibitem{N}   Nedev,  S. I.,    {\it o-metrizable spaces},
Trudy Moskov. Mat.Ob-va  {\bf 24} (1971), 201--236 (English translation: Trans. Moscow Math. Soc.  {\bf  24} (1974), 213--247).



\bibitem{Ne1} Niemytzki, V.,  {\it On the third axiom of metric spaces}, Trans Amer. Math.
Soc.  {\bf 29} (1927), 507--513.

\bibitem{Ne2}  Niemytzki,  V.,  {\it \" Uber die Axiome des metrischen Raumes}, Math. Ann.  {\bf 104}
 (1931), 666--671.
 
 \bibitem{Nie06} Nieto, J. J., Rodriguez-Lopez, R.,\ \textit{Contractive mapping theorems in partially ordered sets and applications to ordinary differential  equations}, Order \textbf{22} (2005), no. 3, 223--239 (2006). 
 
\bibitem{Nie07} Nieto, J. J., Rodriguez-Lopez, R.,\ \textit{Existence and uniqueness of fixed point in partially ordered sets  and applications to ordinary differential  equations}, Acta. Math. Sin., (Engl. Ser.) \textbf{23}(2007), no. 12, 2205--2212.
 
\bibitem{Ola}  Olaoluwa, H., Olaleru, J., {\it Multipled fixed point theorems in cone metric spaces}. Fixed Point Theory Appl. 2014, 2014:43, 15 pp.


\bibitem{Op75}  Opoitsev, V. I.,  \textit{Geterogenn\^ iie i kombinirovano-vognit\^ iie operator\^ i}, Syb. Matem. Jurn.  \textbf{16} (1975), no. 4, 781--792.

\bibitem{Op75a}  Opoitsev, V. I., \textit{Dynamics of collective behavior. III. Heterogenic systems}, translated from Avtomat. i Telemeh. 1975, no. 1, 124--138 Automat. Remote Control 36 (1975), no. 1, 111--124.


\bibitem{Op78} Opoitsev, V. I., {\it Generalization of the theory of monotone and concave operators}, Tr. Mosk. Mat. Obs., 36 (1978), 237--273.

\bibitem{Op84}  Opoitsev, V. I.; Khurodze, T. A.,  \textit{Nelineinye operatory v prostranstvakh s konusom.  [Nonlinear operators in spaces with a cone}, Tbilis. Gos. Univ., Tbilisi, 1984. 271 pp.

\bibitem{Op86}  Opoitsev, V. I., \textit{Nelineinaya sistemostatika}.  \textit{[Nonlinear systemostatics]}  Ekonomiko-Matematicheskaya Biblioteka [Library of Mathematical Economics], 31. Nauka, Moscow, 1986.
 
\bibitem {Ran} Ran, A. C. M., Reurings, M. C. B., \textit{A fixed point theorem in partially ordered sets and some applications to matrix equations}, Proc. Amer. Math. Soc. \textbf{132} (2004), no. 5, 1435--1443.

\bibitem{R1} Rold\' an, A., Mart\' inez-Moreno, J.,  Rold\' an, C.,   {\it Multidimensional fixed point theorems in partially ordered complete metricspaces}, J. Math. Anal. Appl. 396 (2012), 536--545. 

\bibitem{R2} Rold\' an, A., Mart\' inez-Moreno, J., Rold\' an C. and Karapinar, E., {\it Multidimensional Fixed-Point Theorems in Partially Ordered Complete Partial Metric Spaces under $(\psi , \varphi )$-Contractivity Conditions}, Abstr. Appl. Anal. 2013, Art. ID 634371, 12 pp.

\bibitem{R3}   Rold\' an, A., Mart\' inez-Moreno, J.,  Rold\' an, C. and Karapinar, E., {\it Meir-Keeler type multidimensional fixed point theorems inpartially ordered metric spaces}, Abstr. Appl. Anal. 2013, Art. ID 406026, 9 pp.

\bibitem{R5}  Rold\' an, A., Mart\' inez-Moreno, J., Rold\' an, C., Karapinar, E. Some remarks on multidimensional fixed point theorems. Fixed Point Theory 15 (2014), no. 2, 545Ð558.

\bibitem{R4}  Rold\' an, A., Mart\' inez-Moreno, J., Rold\' an, C., Cho, Y. J., {\it Multidimensional fixed point theorems under $(\psi,\phi)$-contractive conditions in partially ordered complete metric spaces}. J. Comput. Appl. Math. 273 (2015), 76--87.

\bibitem{Rus} Rus, M.-D., {\it The fixed point problem for systems of coordinate-wise uniformly monotone operators and applications}. Mediterr. J. Math. 11 (2014), no. 1, 109Ð122.


\bibitem{RP}   Rus,  I. A., Petru\c sel, A.  and  Petru\c sel, G., {\it Fixed Point Theory}, Cluj University Press,
Cluj-Napoca, 2008.

\bibitem{Sab} Sabetghadam, F., Masiha, H. P., Sanatpour, A. H., {\it Some coupled fixed point theorems in cone metric spaces}. Fixed Point Theory Appl. 2009, Art. ID 125426, 8 pp.

\bibitem{SV} Samet, B., Vetro, C.,  {\it  Coupled fixed point, f-invariant set and fixed point of N-order}, Ann. Funct. Anal. 1 (2010), 46--56.

\bibitem{Samet} Samet, B., Vetro, C. and Vetro, P., \textit{Fixed point theorems for alpha-psi-contractive type mappings}, Nonlinear Anal.  \textbf{75} (2012), No. 4,  2154--2165.

\bibitem{Sang} Sang, Y., \textit{A class of $\varphi$-concave operators and applications}, Fixed Point Theory Appl. \textbf{2013}, 2013:274  doi:10.1186/1687-1812-2013-274

\bibitem{Sha} Sharma, A., Imdad, M., Alam, A., {\it Shorter proofs of some recent even-tupled coincidence theorems for weak contractions in ordered metric spaces}. Math. Sci. (Springer) 8 (2014), no. 4, 131--138.

\bibitem{Shat} Shatanawi, W., Samet, B. and Abbas, M., \textit{Coupled fixed point theorems for mixed monotone mappings in ordered partial metric spaces}, Math. Comput. Model. 55 (2012), Nos 3-4, 680--687

\bibitem{Shen} Shendge, G. R., Dasare, V. N., {\it Existence of maximal and minimal quasifixed points of mixed monotone operators by iterative technique}. in Methods of functional analysis in approximation theory (Bombay, 1985), pp. 401--410, Internat. Schriftenreihe Numer. Math., 76, Birkh\" auser, Basel, 1986.

\bibitem{Sint} Sintunavarat, W., Kumam, P. and Cho, Y. J., \textit{Coupled fixed point theorems for nonlinear contractions without mixed monotone property}, Fixed Point Theory Appl. \textbf{2012}, 2012:170  doi:10.1186/1687-1812-2012-170
  
\bibitem{Sol}   Soleimani R. G., Shukla, S. and Rahimi, H., {\it Some relations between $n$-tuple fixed point and fixed point results}. Rev. R. Acad. Cienc. Exactas F\' is. Nat. Ser. A Math. RACSAM 109 (2015), no. 2, 471--481.


\bibitem{Urs}  Urs, C., \textit{Coupled fixed point theorems and applications to periodic boundary value problems}, Miskolc Math. Notes, \textbf{14} (2013), no. 1,   323--333

\bibitem{Wang} Wang, S., {\it Multidimensional fixed point theorems for isotone mappings in partially ordered metric spaces}. Fixed Point Theory Appl. 2014, 2014:137, 13 pp.

\bibitem{Wu} Wu, J.  and Liu, Y., \textit{Fixed point theorems for monotone operators and applications to nonlinear elliptic problems}, Fixed Point Theory Appl. \textbf{2013}, 2013:134  doi:10.1186/1687-1812-2013-134

\bibitem{Xiao} Xiao, J.-Z.,  Zhu, X.-H. and Shen, Z.-M., \textit{Common coupled fixed point results for hybrid nonlinear contractions in metric spaces}, Fixed Point Theory \textbf{ 14} (2013), No. 1,  235--249

\bibitem{ZCS} Zhu, L., Zhu, C.-X., Chen, C.-F., Stojanovi\' c, \v Z., {\it Multidimensional fixed points for generalized $\psi$-quasi-contractions in quasi-metric-like spaces}. J. Inequal. Appl. 2014, 2014:27, 15 pp.



\end{thebibliography}
\end{document}